\theoremstyle{plain}
\newtheorem{theorem}{Theorem}[section]
\newtheorem{corollary}[theorem]{Corollary}
\newtheorem{lemma}[theorem]{Lemma}
\newcommand{\RealVect}[1]{{\mathbb R}^{#1}}
\newcommand{\Rk}[3]{\frac{1}{|{#1} - {#2}|^{#3}}} % The Riesz kernel
\newcommand{\Esf}[2]{\frac{\Es\left({#1},{#2}\right)}{{#2}^{1+s/d}}} % Convenience definition
\newcommand{\Esfp}[2]{\frac{\Es\left({#1},{#2}\right)}{\left({#2}\right)^{1+s/d}}} % Convenience definition
\def\Rl{\mathbb R} % The Real numbers
\def\Rp{\RealVect{p}} % p-diemnsional real vector field
\def\MA{{\mathcal M}(A)} % Finite Borel measures with support in A
\def\MAp{{\mathcal M}_1(A)}% Borel probability measures with support in A
\def\Es{{\mathcal E}_s} % Finite Borel measures with finite s-energy
\def\Hd{{\mathcal H}^d} % d-dimensional Hausdorff measure
\def\glow{\underline g_{s,d}} % the lower asymptotic limit
\def\gup{\overline g_{s,d}} % the lower asymptotic limit
\def\gex{g_{s,d}} % the lower limit
\DeclareMathOperator{\diam}{diam}
\DeclareMathOperator{\argmin}{arg\ min}
\begin{document}

\title{A sequence of discrete minimal energy configurations that does not converge in the weak-star topology}

\author{Matthew T. Calef}
\address{M. T. Calef:
Computational Physics (CCS-2),
Los Alamos National Laboratory,
Los Alamos, NM 87545,
USA }

\email{mcalef@lanl.gov}

\thanks{Los Alamos National Laboratory, an affirmative action/equal
  opportunity employer, is operated by Los Alamos National Security,
  LLC, for the National Nuclear Security Administration of the
  U.S. Department of Energy under contract DE-AC52-06NA25396.}

\begin{abstract}
We demonstrate a set $A$ and a value of $s$ for which the sequence of
$N$-point discrete minimal Riesz $s$-energy configurations on $A$ does
not have an asymptotic distribution in the weak-star sense as $N$
tends to infinity.
\end{abstract}

\maketitle

\section{Introduction}

In the study of classical electrostatics the representation of a
large collection of electrons on a conductor by a charge density function is
commonplace. In many cases such a representation is both physically
and mathematically grounded. Here we consider an interaction potential
derived from electrostatics and certain conductor geometries as a
means to establish limits on when such a representation of many point
charges is valid.  In particular we demonstrate that, for certain
fractal conductor geometries and Riesz potentials, there is no
limiting charge distribution.

Consider a conductor $A$ as a compact subset of $\Rp$ with
Hausdorff dimension $d$. Let $\omega_N = \{x_1,\ldots,x_N\}$ denote
the locations of $N$ electrons on $A$. The electrostatic energy of
$\omega_N$ is, up to a constant,
$$
E_s(\omega_N) := \sum_{i=1}^N \sum_{j\ne i} \Rk{x_i}{x_j}{s},
$$ where $s$ is chosen to be one. By varying $s$ the Riesz $s$-kernel
$|x-y|^{-s}$ can represent generalizations of the Coulomb
potential that decay or are singular to varying degrees. 

The infimum of the $N$-point $s$-energy is denoted by $$ \Es(A,N) :=
\inf_{\omega_N \subset A} E_s(\omega_N).
$$ We extend $\Es$ so that $\Es(A,0) = \Es(A,1) := 0$. For convenience
we exclude the trivial case that $A$ has finitely many points.  For
any positive value of $s$ the functional $E_s$ is lower
semicontinuous, therefore, by the compactness of $A$, there is at
least one configuration, which is denoted $\omega_N^s$, that satisfies
$$
\Es(A, N) = E_s(\omega_N^s),
$$ Identifying a minimal configuration $\omega_N^s$ for even simple
conductor geometries such as $\mathbb{S}^2$ is a formidable task.
Alternatively, one may study qualitative properties of $\omega_N^s$ as
$N$ tends to infinity.  A common way to do this recasts the problem in
terms of measures. For each $N$ and $s$ construct the probability
measure
$$
\mu^{s,N} = \frac 1 N \sum_{x \in \omega_N^s} \delta_x
$$ that places a scaled Dirac-measure $\delta$ at each of the points
in $\omega_N^s$.  One then considers measure-theoretic properties of
the sequence of measures $\{\mu^{s,N}\}_{N=2}^\infty$.

In the case when $s<d$ one may formulate a continuous version of
this problem as follows: Let $\MA$ denote the (unsigned) Borel
measures supported on $A$. Let $\MAp \subset\MA$ denote the Borel
probability supported on $A$. If we represent a charge density by a
measure $\mu\in\MA$ then the Riesz $s$-energy of $\mu$ is
$$
I_s(\mu) := \iint \Rk{x}{y}{s} d\mu(y) d\mu(x),
$$
and the electrostatic ($s=1$) energy of $\mu$ is $I_1(\mu)$. We may
then consider the minimization problem
$$
\mu^s = \argmin \{ I_s(\mu) \, :\, \mu \in \MAp\}.
$$ It is well known (cf~\cite{Landkof1}) that $\mu^s$ exists and is
unique, that 
$$
\lim_{N\to\infty} \frac{\Es(A,N)}{N^2} = I_s(\mu^s),
$$
and that for any function $f$ that is continuous on $A$
$$
\lim_{N\to\infty} \int f\, d\mu^{s,N} = \int f\, d\mu^s.
$$ The last condition is referred to as weak-star convergence of
$\mu^{s,N}$ to $\mu^s$. A natural interpretation is that $\mu^s$ is
the continuous equilibrium charge distribution on $A$ and that it is a
limit, as $N$ tends to infinity, of minimal $s$-energy
$N$-point configurations. These results have practical value as it is
often substantially easier to obtain finite-dimensional approximations
of $\mu^s$ than it is to determine the points that make up
$\omega_N^s$ for large $N$.

In the case when $s\ge d$ every non-zero Borel measure supported on
$A$ has infinite $s$-energy (cf.~\cite{Mattila1}) and there is no
obvious candidate for a continuous charge distribution that is the
limit of the discrete minimal energy configurations. For this range of
$s$ recent results in~\cite{HardinSaff1, BHS1, BHS2} show that when
$A$ has certain rectifiability properties, the $N$-point minimal
configurations are asymptotically uniform,
i.e. $\{\mu^{s,N}\}_{N=2}^\infty$ converges in the weak-star sense to
a suitably normalized $d$-dimensional Hausdorff measure $\Hd$
restricted to $A$. Additionally, the order of growth of $\Es(A,N)$ is
shown to be $N^{1+s/d}$ when $s>d$ and $N^2\log N$ when $s=d$.

In~\cite{CalefHardin1} a candidate for a normalized $s=d$ energy is
presented. (An alternate normalization for the case $s=d$ is presented
by Gustafsson and Putinar in~\cite{GustafssonPutinar1}. Putinar uses
this normalization to study inverse moments problems
in~\cite{Putinar1}.)  The work in~\cite{CalefHardin1, Calef2} was
motivated by the hope that one could formulate, at least in the case
$s=d$, a normalized energy that could be used to obtain results for
the limit as $N$ tends to infinity of the discrete minimal $d$-energy
configurations.  This effort was able to show that, for certain
rectifiable or fractal sets, the normalized energy is uniquely
minimized $\Hd$ restricted to $A$ and normalized to have mass $1$ --
i.e. the uniform measure -- and that $\mu^s$ converges in the
weak-star sense to this uniform measure as $s$ approaches $d$ from
below.  However, it is not yet clear what can be inferred about the
limit as $N$ grows of the discrete minimal $d$-energy configurations.

Relatedly, the results presented in~\cite{HardinSaff1} rely heavily on
characteristics -- e.g. the local structure of the configurations
$\omega^s_N$ -- that are lost in the weak-star limit. Further, results
from~\cite{BHS2} show that for $s$ sufficiently large and $A$ in a
class of self-similar fractals these local characteristics can cause
the discrete minimal energy to oscillate to leading order as $N$ tends
to infinity.  These results suggest that there are limits to what can
be obtained from any continuous minimization problem, and this paper
affirms this.  By using the oscillations in energy demonstrated
in~\cite{BHS2}, we demonstrate a value of $s$ and a set $A$ where any
sequence of minimal $N$-point $s$-energy configurations does not
converge in the weak-star sense. For such a set there can be no valid
continuous representation of the limiting distribution as the number
of points grows to infinity.

\subsection{Main Results}

We consider, for some compact $K\subset \Rp$ of Hausdorff dimension
$d$, the functions (cf.~\cite{HardinSaff1})
$$\glow(K) = \liminf_{N\to\infty}\Esf{K}{N}
\qquad\text{and}\qquad\gup(K) = \limsup_{N\to\infty} \Esf{K}{N}.$$ If
$\glow(K) = \gup(K)$ we denote the common value by $\gex(K)$. With
this we prove the following theorem.
\begin{theorem}\label{th:1}
Let $A\subset \Rp$ be the disjoint union of two compact sets $A_1$ and
$A_2$ of Hausdorff dimension $d>0$ satisfying the following for some
$s>d$:
\begin{itemize}
\item[1.] $\diam(A_1)$ and $\diam(A_2)$ are both less than
  $d(A_1,A_2) := \inf \{ |x-y| : x\in A_1,\,y\in A_2\}$, 
\item[2.] $0<\glow(A_1)<\gup(A_2)<\infty$,
\item[3.] $\gex(A_2)$ exists and is positive and finite.
\end{itemize}
Then the sequence of measures $\{\mu^{s,N}\}_{N=2}^\infty\subset \MAp$ cannot
converge in the weak-star topology on $\MAp$.
\end{theorem}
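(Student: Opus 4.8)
The plan is to reduce the statement about weak-star convergence to a statement about the proportion of mass a minimizer places on $A_1$, and then to show that this proportion oscillates. Throughout write $\tilde n_N := \#(\omega_N^s\cap A_1)$ and $t_N := \tilde n_N/N$. Since $A_1,A_2$ are disjoint compact sets there is a continuous $f\colon A\to[0,1]$ with $f\equiv 1$ on $A_1$ and $f\equiv 0$ on $A_2$, and then $\int f\,d\mu^{s,N}=t_N$. Hence if $\mu^{s,N}\cws\mu$ for some $\mu\in\MAp$ we must have $t_N\to\mu(A_1)=:t_0$. It therefore suffices to prove that $t_N$ does \emph{not} converge.

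First I would decouple the energy across the gap. The interaction between $\omega_N^s\cap A_1$ and $\omega_N^s\cap A_2$ is nonnegative and, by the separation in hypothesis~1, at most $2\tilde n_N(N-\tilde n_N)\,d(A_1,A_2)^{-s}\le CN^2$; since $s>d$ forces $1+s/d>2$ this cross term is $o(N^{1+s/d})$. Superimposing optimal configurations on each piece (upper bound) and discarding the cross term (lower bound) then gives
\[
F_N(\tilde n_N)\ \le\ \Es(A,N)\ \le\ \min_{0\le n\le N}F_N(n)+CN^2,\qquad F_N(n):=\Es(A_1,n)+\Es(A_2,N-n),
\]
so $\tilde n_N$ minimizes $F_N$ up to an additive $o(N^{1+s/d})$ error. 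Because $\Es(A_i,1)=0$, placing all points on one piece is suboptimal, so the minimizer is interior: both $\tilde n_N$ and $N-\tilde n_N$ tend to infinity (the degenerate cases $t_0\in\{0,1\}$ are ruled out separately).

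Next I would analyze this one-parameter problem. For $c>0$ set $\Phi_c(t):=c\,t^{1+s/d}+\gex(A_2)\,(1-t)^{1+s/d}$ on $[0,1]$; as $1+s/d>1$ this is strictly convex with a unique minimizer $t^\ast(c)\in(0,1)$ satisfying $t^\ast/(1-t^\ast)=(\gex(A_2)/c)^{d/s}$, and $t^\ast(c)$ is strictly decreasing in $c$. Using $\Es(A_2,m)/m^{1+s/d}\to\gex(A_2)$ (hypothesis~3) together with $\liminf_n \Es(A_1,n)/n^{1+s/d}=\glow(A_1)$ and $\limsup_n \Es(A_1,n)/n^{1+s/d}=\gup(A_1)$, I would build two subsequences. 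Along a subsequence of $N$ for which a ``trough'' of $\Es(A_1,n)/n^{1+s/d}$ (value near $\glow(A_1)$) lands near $t^\ast(\glow(A_1))\,N$, one gets $\Esf{A}{N}\to\min_t\Phi_{\glow(A_1)}(t)$, and strict convexity pins $t_N\to t^\ast(\glow(A_1))$. Along a subsequence for which no trough is favorably located, the minimizer is instead driven to the part of the $A_1$-range where $\Es(A_1,n)/n^{1+s/d}$ is near $\gup(A_1)$, giving $t_N\to t^\ast(\gup(A_1))$. Since the oscillation is genuine, $\glow(A_1)<\gup(A_1)$, hence $t^\ast(\glow(A_1))\neq t^\ast(\gup(A_1))$, so $t_N$ has two distinct subsequential limits and cannot converge. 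Hypothesis~2, namely $\glow(A_1)<\gup(A_2)=\gex(A_2)$, is what makes $A_1$ strictly more efficient than $A_2$ at its troughs, keeping $t^\ast(\glow(A_1))$ a nondegenerate interior proportion and the two regimes genuinely different.

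I expect the delicate step to be the construction of the \emph{second} subsequence. Placing a trough of the $A_1$-energy near a prescribed proportion is easy; forcing $t_N$ away from $t^\ast(\glow(A_1))$ requires showing the minimizer prefers the ``bulk'' proportion $t^\ast(\gup(A_1))$ to the cost of shifting $n$ to a distant trough. This is a quantitative competition between the amplitude $\gup(A_1)-\glow(A_1)$ and the geometric, self-similar spacing of the troughs supplied by the fractal structure of~\cite{BHS2}: the troughs must be sparse enough, their consecutive ratio large relative to the curvature $\Phi_c''(t^\ast)$, that for a positive-density set of $N$ every trough sits at a proportion where $\Phi$ already exceeds the bulk value $\min_t\Phi_{\gup(A_1)}(t)$. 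Verifying that the chosen $A_1$ meets this threshold, and dispatching the boundary cases $t_0\in\{0,1\}$, is where the real work lies; the remainder reduces to the strict convexity of $\Phi_c$ and the $o(N^{1+s/d})$ decoupling afforded by hypothesis~1.
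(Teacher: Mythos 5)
Your reduction to the non-convergence of $t_N=N_1(N)/N$, the decoupling of the energy across the gap, and the identification of the cluster point $t^\ast(\glow(A_1))$ all match the paper (its Lemma~\ref{lm:2}; note the paper justifies that $N_1(N)$ and $N-N_1(N)$ tend to infinity via the covering-radius argument of Lemma~\ref{lm:1}, not merely via $\Es(A_i,1)=0$, which by itself only rules out putting \emph{all} points on one piece). The genuine gap is exactly the step you flag yourself: the second subsequence. You propose to exhibit $N$ along which $t_N\to t^\ast(\gup(A_1))$ by arguing that ``no trough is favorably located,'' but you give no mechanism for producing such $N$ from the hypotheses, and you ultimately defer to quantitative sparseness properties of the troughs coming from the self-similar structure of~\cite{BHS2}. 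The theorem, however, is stated for arbitrary compact sets satisfying conditions 1--3, so the proof cannot lean on the geometric spacing of troughs; and even for the example set no such spacing estimate is established. As written, your argument produces one cluster point of $t_N$ and then asserts a second.

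The paper closes this gap with a different device that avoids constructing a second cluster point of $t_N$ altogether. It argues by contradiction: if $\mu^{s,N}\cws\psi$, then $t_N\to\alpha^\ast=t^\ast(\glow(A_1))$ along the \emph{full} sequence. It then proves a one-sided rate-of-change bound (Lemma~\ref{lm:3} and Corollary~\ref{cor:1}), namely $\Esfp{K}{(1+\kappa)N}\ge\left(1-\left(1+\tfrac{s}{d}\right)\kappa\right)\Esf{K}{N}$, so the normalized energy cannot drop quickly as $N$ increases and hence stays within $\varepsilon$ of $\gup(A_1)$ on a whole multiplicative window above each near-peak $\overline N_n^1$. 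Choosing $N'$ so that $\alpha^\ast N'$ lands in such a window forces $N_1(N')$ (pinned near $\alpha^\ast N'$ by the convergence assumption) to satisfy $\Esf{A_1}{N_1(N')}\ge\gup(A_1)-\varepsilon$, giving $\liminf_{n}\Esf{A}{N_n'}\ge f(\alpha^\ast)$ for $f(\beta)=\beta^{1+s/d}\gup(A_1)+(1-\beta)^{1+s/d}\gex(A_2)$; meanwhile the superposition upper bound gives $\limsup_{n}\Esf{A}{N_n'}\le\min_\beta f(\beta)<f(\alpha^\ast)$, since $\alpha^\ast$ minimizes the analogous functional with $\glow(A_1)$ in place of $\gup(A_1)$ and $\glow(A_1)<\gup(A_1)$. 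This is the missing step, and it requires no structural information about $A_1$ beyond the strict inequality of the liminf and limsup.
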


The central idea behind the proof of Theorem~\ref{th:1} is that, if
$\Es(A_1, N)$ oscillates to leading order, i.e. $N^{1+s/d}$, then the
ratio of the number of points in $\omega_N^s \cap A_1$ to the number
of points in $\omega_N^s \cap A_2$ cannot be constant.  This is
sufficient to show that if one chooses a Urysohn function $\phi$ that
is $1$ on $A_1$ and $0$ on $A_2$, then the limit
$$
\lim_{N\to\infty} \int \phi d\mu^{s,N}
$$
cannot exist, which is enough to show that
$\{\mu^{s,N}\}_{N=2}^\infty$ cannot have a weak-star limit.

The rest of this paper is organized as follows: Section~\ref{sec:wscp}
describes the class of sets that we consider and provides an example
in this class based on the results in~\cite{BHS2}. A particular
weak-star cluster point as $N$ tends to infinity for the discrete
minimal $N$-point energy is demonstrated.  Section~\ref{sec:roc}
proves a lemma regarding the rate of change of the ratio of the
minimal $N$-point energy divided by its leading order in $N$.
Section~\ref{sec:nc} uses the results from the previous sections to
show that, for the ranges of $s$ and sets under consideration, there
is no weak-star limit to the sequence $\{\mu^{s,N}\}_{N=2}^\infty$.

\section{The Set $A$ and a Weak Star Cluster Point}\label{sec:wscp}

We shall consider a set $A$ that is the disjoint union of two compact
set $A_1$ and $A_2$.  We require that $\diam(A_1)$ and $\diam(A_2)$
are both less than $d(A_1,A_2) = \inf\{|x-y|:x\in A_1, y\in A_2\}$. We
shall further restrict ourselves to such sets satisfying
$0<\glow(A_1)<\gup(A_1)<\infty$ and $0<\gex(A_2)<\infty$.

\subsection{An example of such a set}

In~\cite{HardinSaff1} Hardin and Saff show that if $A_2$ is a compact
subset of a $d$-dimensional $C^1$-manifold embedded in $\Rp$,
satisfying $0<\Hd(A_2)<\infty$, then $0<\glow(A_2)=\gup(A_2)<\infty$.
In Proposition 2.6 of~\cite{BHS2} Borodachov, Hardin and Saff show
that if $A_1$ belongs to a certain class of self-similar
$d$-dimensional fractals, then $0<\glow(A_1)<\gup(A_1)<\infty$.  The
set $A_1$ belongs to this class if
\begin{equation}\label{eq:1}
A_1=\bigcup_{i=1}^K \varphi_i(A_1),
\end{equation}
where each $\varphi_i:\Rp\to\Rp$ is a similitude with scaling $L$ and where
$\varphi_i(A_1)\cap\varphi_j(A_1)=\varnothing$ for all $i\ne j$.  It
is a consequence of results by Moran presented in~\cite{Moran1} that
there is a unique compact set $A_1$ that satisfies the condition in
Equation~\eqref{eq:1}, that the Hausdorff dimension of $A_1$ denoted
by $d$ is the solution of the equation $1=KL^d$ and that
$0<\Hd(A_1)<\infty$.  Cantor sets are a prominent example from this
class. 

For a concrete example consider the set that is the union of $$A_2
=\{(x_1,x_2)\in\RealVect{2} : x_1\in[3,4] \text{ and } x_2 = 0\}$$ and
the set $A_1$ satisfying
$$
A_1 = \bigcup_{i=1}^4 \varphi_i (A_1),
$$
where 
$$
\varphi_1(x) = \frac{x}{4},\qquad
\varphi_2(x) = \frac{x}{4} + \left(\frac{3}{4}, 0\right),\qquad
\varphi_3(x) = \frac{x}{4} + \left(0 , \frac{3}{4}\right)\qquad
\text{and}\qquad
\varphi_4(x) = \frac{x}{4} + \left(\frac{3}{4}, \frac{3}{4}\right).
$$
The dimension of both $A_1$ and $A_2$ is $1$, and both have positive
and finite ${\mathcal H}^1$ measure. 

\subsection{A weak-star cluster point of $\{\mu^{s,N}\}_{N=2}^\infty\subset\MAp$}

Here we shall identify a cluster point in the weak-star topology on
$\MAp$ of the sequence of measures $\{\mu^{s,N}\}_{N=2}^\infty$. This
cluster point is one in which the $s$-energy of the sequence of
configurations $\{\omega_N^s\cap A_1\}_{N=2}^\infty$ achieves
$\glow(A_1)$. We proceed by drawing on ideas and techniques developed
by Hardin and Saff in~\cite{HardinSaff1}.

An upper bound for $\displaystyle{\frac{\Es(A,N)}{N^{1+s/d}}}$ can be
obtained as follows: Choose natural numbers $M_1$ and $M_2 = N - M_1$
and consider a configuration of points $\tilde \omega_N$ such that
$\#\tilde \omega_N \cap A_1 = M_1$ and $\#\tilde \omega_N \cap A_2 =
M_2$, and where $E_s(\tilde \omega_N\cap A_1) = \Es(A_1, M_1)$ and
$E_s(\tilde \omega_N\cap A_2) = \Es(A_2, M_2)$. Here the $\#$ denotes
the number of elements in the set following it.  Bounding from above
the interaction energy of points on $A_1$ and points on $A_2$ by
$d(A_1, A_2)^{-s}N^2$ and dividing by $N^{1+s/d}$ gives
\begin{equation}\label{eq:2}
\frac{\Es(A,N)}{N^{1+s/d}} \le
\frac{E_s(\tilde \omega_N)}{N^{1+s/d}} \le
\left(\frac{M_1}{N}\right)^{1+s/d}\Esf{A_1}{M_1} +
\left(\frac{M_2}{N}\right)^{1+s/d}\Esf{A_2}{M_2} +
d(A_1,A_2)^{-s}\frac{N^2}{N^{1+s/d}}.
\end{equation}

Let $\left\{\underline
N_n^1\right\}_{n=1}^\infty\subset\mathbb{N}$ be an increasing
sequence so that 
$$
\lim_{n\to\infty} \Esf{A_1}{\underline N_n^1} = \glow(A_1).
$$
For $\alpha \in (0,1)$ let $\{N_n^\alpha\}_{n=1}^\infty \subset
\mathbb{N}$ be the sequence defined by $N_n^\alpha =
\left\lfloor\frac{1}{\alpha}\underline N_n^1\right\rfloor$. Applying
the bound given in Inequality~\eqref{eq:2} where $N$ is chosen to be
$N_n^\alpha$, $M_1$ is chosen to be $\underline N_n^1$ and $M_2$ is
chosen to be $N_n^\alpha - \underline N_n^1$ gives
$$
\Esf{A}{N_n^\alpha} \le
\left(\frac{\underline
  N_n^1}{N_n^\alpha}\right)^{1+s/d}\Esf{A_1}{\underline N_n^1} + 
\left(\frac{N_n^\alpha - \underline
  N_n^1}{N_n^\alpha}\right)^{1+s/d}\Esfp{A_2}{N_n^\alpha - \underline N_n^1} + 
d(A_1,A_2){N_n^\alpha}^{1-s/d}
$$
For every $\varepsilon > 0$ we may find an $N_0 = N_0(\varepsilon)$
sufficiently high so that for every $N_n^\alpha > N_0$ 
\begin{equation}\label{eq:3}
\Esf{A}{N_n^\alpha} \le
\alpha^{1+s/d}\glow(A_1) + (1-\alpha)^{1+s/d}\gex(A_2) + \varepsilon.
\end{equation}
The unique value of $\alpha$ that minimizes the right hand side of
Inequality~\eqref{eq:3} is 
$$
\alpha^* = \frac{\gex(A_2)^{d/s}}{\glow(A_1)^{d/s} + \gex(A_2)^{d/s}}.
$$
Define $\{N_n^{\alpha^*}\}_{n=1}^\infty$ by $N_n^{\alpha^*} =
\left\lfloor \frac{1}{\alpha^*} \underline N_n^1\right\rfloor$.

To obtain a lower bound for $\Esf{A}{N}$ begin by defining the
function $N_1:\mathbb{N}\to\mathbb{N}$ by
$$
N_1(N) = \min_{\omega_N^s \subset A}\#(\omega_N^s\cap A_1).
$$ Because an $N$-point $s$-energy-minimizing configuration $\omega_N^s$
may not be unique, we take a minimum over all $N$-point
$s$-energy-minimizing configurations to determine the value $N_1(N)$.  We
define $N_2(N)$ to be $N - N_1(N)$. Note that $\displaystyle{N_2(N)
  \ge \min_{\omega_N^s \subset A}\#(\omega_N^s\cap A_2)}$.

Given $N$ and a minimal $N$-point configuration $\omega_N^s$ such that
$N_1(N) = \#\omega_N^s\cap A_1$, we may bound from below $\Es(A,N) =
E_s(\omega_N^s)$ by discarding the interaction energy between points
in $\omega_N^s \cap A_1$ and points in $\omega_N^s \cap A_2$ and then
further replacing the points in $\omega_N^s\cap A_1$ with a minimal
$N_1(N)$-point configuration in $A_1$ and replacing the points in
$\omega_N^s\cap A_2$ with a minimal $N_2(N)$-point configuration in
$A_2$. The bound is then
\begin{equation}\label{eq:4}
\Esf{A}{N} \ge 
\left(\frac{N_1(N)}{N}\right)^{1+s/d}\Esf{A_1}{N_1(N)} +
\left(\frac{N_2(N)}{N}\right)^{1+s/d}\Esf{A_2}{N_2(N)}. 
\end{equation}
We would like to improve the bound above by employing the asymptotic
properties of $\displaystyle{\Esf{A_1}{N_1(N)}}$ and
$\displaystyle{\Esf{A_1}{N_1(N)}}$.  This will require ensuring that
both $N_1(N)$ and $N_2(N)$ tend to infinity as $N$ tends to infinity.
This will be accomplished by applications of Lemma~\ref{lm:1}, which
employs ideas presented by Bj\"orck in~\cite{Bjorck1}.

\begin{lemma}\label{lm:1}
Let $B_1$, $B_2 \subset\Rp$ be compact and of dimension $d>0$. Further
suppose $\diam(B_2) < d(B_1, B_2)$. Let $s>d$ and define 
$$
\tilde N_1(N) = \min_{\omega_N^s\subset B_1\cup B_2} \# (\omega_N^s\cap B_1),
$$
where the minimum is taken over all $N$-point minimal $s$-energy
configurations within $B_1\cup B_2$. Then 
$$
\liminf_{N\to\infty} \tilde N_1(N) = \infty.
$$
\end{lemma}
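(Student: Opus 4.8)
The plan is to argue by contradiction via a single‑point relocation (exchange) argument in the spirit of Björck. Suppose the conclusion fails, so that $\liminf_{N\to\infty}\tilde N_1(N)=L<\infty$. Then there is an increasing sequence $N_k\to\infty$ and, for each $k$, an $N_k$-point minimal configuration $\omega^{(k)}$ with $m_k:=\#(\omega^{(k)}\cap B_1)\le M$ for the fixed integer $M:=L+1$. Write $n_k:=N_k-m_k\ge N_k-M$ for the number of points on $B_2$. I will show that for large $k$ one can strictly lower the energy of $\omega^{(k)}$ by deleting a carefully chosen point of $B_2$ and inserting a point on $B_1$, contradicting minimality.

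First I would quantify the cost of the point to be deleted. For $x\in\omega^{(k)}$ let $U(x):=\sum_{y\in\omega^{(k)},\,y\ne x}|x-y|^{-s}$ denote its potential, and let $x^\ast\in\omega^{(k)}\cap B_2$ maximize $U$ over the points on $B_2$, with value $U^\ast$. Summing potentials over $B_2$ and discarding the nonnegative cross terms gives $\sum_{x\in\omega^{(k)}\cap B_2}U(x)\ge E_s(\omega^{(k)}\cap B_2)\ge\Es(B_2,n_k)$, whence $U^\ast\ge\Es(B_2,n_k)/n_k$. The key quantitative input is that, because $s>d=\dim_H B_2$, every nonzero measure on $B_2$ has infinite $s$-energy, so the standard monotone first–order limit, $\Es(B_2,n)/\big(n(n-1)\big)\to\inf\{I_s(\mu):\mu\in\mathcal{M}_1(B_2)\}=+\infty$, diverges. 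Consequently $U^\ast\ge (n_k-1)\,h(n_k)$ with $h(n):=\Es(B_2,n)/\big(n(n-1)\big)\to\infty$; in particular $U^\ast/N_k\to\infty$.

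Next I would bound from above the cost of inserting a point on $B_1$. Since $B_1$ is compact of dimension $d>0$ it is infinite, so for each $M$ there is $\rho_M>0$ and $M+1$ points of $B_1$ that are pairwise $\rho_M$-separated; given the at most $M$ points of $\omega^{(k)}\cap B_1$, at least one of these separated points, call it $z$, satisfies $|z-y|\ge\rho_M/2$ for every $y\in\omega^{(k)}\cap B_1$. For such a $z$ the potential against the $B_1$-points is at most $M(\rho_M/2)^{-s}=:C_1(M)$, while its potential against the $n_k-1$ remaining points of $B_2$ is at most $(n_k-1)\,d(B_1,B_2)^{-s}$, using only that $d(B_1,B_2)>0$. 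Replacing $x^\ast$ by $z$ changes the energy by $2\big(\tilde U(z)-U^\ast\big)$, where $\tilde U(z)$ is the potential of $z$ against $\omega^{(k)}\setminus\{x^\ast\}$; minimality forces $U^\ast\le\tilde U(z)\le C_1(M)+(n_k-1)\,d(B_1,B_2)^{-s}$, i.e. $U^\ast=O(N_k)$.

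Combining the two estimates yields $(n_k-1)\,h(n_k)\le C_1(M)+(n_k-1)\,d(B_1,B_2)^{-s}$; dividing by $n_k-1$ and letting $k\to\infty$ makes the left side diverge while the right side stays bounded, which is the desired contradiction. The \emph{main obstacle}, and the heart of the argument, is the lower bound on $U^\ast$: everything hinges on the fact that for $s\ge d$ the continuous energy is infinite, so the per‑point potential on the crowded set $B_2$ grows superlinearly in $N$, whereas the marginal cost of a point on the sparsely populated $B_1$ is only linear. I note that this step uses the hypotheses only through the positivity $d(B_1,B_2)>0$ and $s>d$; the stronger assumption $\diam(B_2)<d(B_1,B_2)$ is not needed here, though it conveniently guarantees that the two sets are well separated.
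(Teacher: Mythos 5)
Your proof is correct, and while it shares the overall Bj\"orck-style skeleton with the paper's argument (assume $\tilde N_1(N_k)=L$ along a subsequence, exhibit a well-separated insertion point in $B_1$ via the infinitude of $B_1$, and derive a contradiction from a single delete-and-insert exchange), the key quantitative comparison is genuinely different. The paper compares two \emph{linear-in-$N$} bounds: every point of $\omega_{N_k}^s\cap B_2$ has potential at least $(N_k-L-1)\diam(B_2)^{-s}$, while the candidate point in $B_1$ has potential at most $LR^{-s}+(N_k-L)d(B_1,B_2)^{-s}$, and the hypothesis $\diam(B_2)<d(B_1,B_2)$ is exactly what makes the first slope beat the second. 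You instead take the \emph{maximal} potential over $\omega^{(k)}\cap B_2$, bound it below by $\Es(B_2,n_k)/n_k$, and invoke the classical fact that $\Es(B_2,n)/(n(n-1))$ increases to the continuous minimal energy, which is infinite because $s>d=\dim B_2$ forces zero $s$-capacity; this makes your lower bound superlinear, so only $d(B_1,B_2)>0$ is needed on the other side and the diameter hypothesis becomes superfluous for this lemma. The trade-off is that the paper's proof is entirely elementary (explicit constants, no potential theory), whereas yours leans on the transfinite-diameter/capacity theorem (monotonicity plus a weak-star compactness and lower-semicontinuity argument in the infinite-energy case) --- a standard but nontrivial input that you should cite or sketch if you present this version. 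Your pigeonhole construction of the insertion point ($M+1$ points pairwise $\rho_M$-separated, at most $M$ of them blocked) is a clean substitute for the paper's covering-radius infimum $R$, and the bookkeeping of the exchange (energy change $2(\tilde U(z)-U^*)$, minimality forcing $U^*\le\tilde U(z)$) is accurate.
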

\begin{proof}
For sake of contradiction assume that 
$$
\liminf_{N\to\infty} \tilde N_1(N) = L < \infty,
$$
then there is an increasing sequence $\{N_i\}_{i=1}^\infty \subset
\mathbb{N}$ so that $\tilde N_1(N_i) = L$ for all $i\in \mathbb{N}$.
The contradiction shall be that, for large enough $N$, a configuration
that only places $L$ points on $B_1$ cannot have minimal energy.

We first show that the quantity 
$$
R = \inf_{\begin{array}{c}K\subset B_1 \\ \# K = L\end{array}}
\max_{y\in B_1} d (y, K)
$$ is positive. We may choose a sequence of $L$-point configurations
in $B_1$, $\{K_n\}_{n=1}^\infty$, so that
$$
\lim_{n\to\infty} \max_{y\in B_1} d (y, K_n) = R,
$$ and because the $L$-fold product of $B_1$ with itself is compact,
we may choose $\{K_n\}_{n=1}^\infty$ to be convergent to some $K^*$.
Continuity of the function $B_1^L \ni K\to \max_{y\in B_1} d (y, K)\in\Rl$ allows us
to conclude 
$$
\max_{y\in B_1} d (y, K^*) = R.
$$
The set $B_1$ is of positive Hausdorff dimension so it is infinite,
which is sufficient to conclude $R>0$.

If $\tilde N_1(N_i) = L$ for all $i$, then for every $i$ there is a
point $r_i \in B_1$ that is separated from $\omega_{N_i}^s$ by at
least $R$ for any minimal $N_i$-point configuration $\omega_{N_i}^s
\subset B_1\cup B_2$.  We may bound the potential energy at $r_i$ due
to the point in $\omega_{N_i}^s$ from above by
$$
U(r_i) = \sum_{x\in\omega_{N_i}^s}\Rk{r_j}{x}{s}
\le LR^{-s} + (N_i -L)d(B_1, B_2)^{-s},
$$
where the first term is an upper bound for the contribution of the $L$
points of $\omega_{N_i}^s \cap B_1$ and the second term is an upper
bound for the $N_i - L$ points of $\omega_{N_i}^s \cap B_2$.

Alternatively, given any point $x_j\in \omega_{N_i}^s \cap B_2$ we may
bound from below its point energy due to the other points by 
$$
U_j(x_j) = \sum_{x\in \omega_{N_i}^s \backslash \{x_j\}} 
\Rk{x_j}{x}{s} \ge (N_i - L - 1)\diam (B_2)^{-s}.
$$ In this lower bound we have excluded the contribution to the point
energy at $x_j$ from the points in $\omega_{N_i}^s \cap B_1$, and
bounded from below the contribution to the point energy at $x_j$ from any 
point in $\omega_{N_i}^s \cap B_2\backslash \{x_j\} $ by
$\diam(B_2)^{-s}$.  

Because $d(B_1, B_2) > \diam(B_2)$, we may find $N_i$ sufficiently
high so that $U(r_i) < U_j(x_j)$, but then the energy of configuration
$\omega_{N_i}^s$ could be reduced by moving the $j^\text{th}$ point
from $x_j$ to $r_j$ and this contradicts the assumption that
$\omega_{N_i}^s$ has minimal energy.
\end{proof}

We may apply Lemma~\ref{lm:1} to the sets under consideration by
identifying either $A_1$ or $A_2$ as $B_1$. This is sufficient to show
that
$$
\liminf_{N\to\infty}N_1(N) = \liminf_{N\to\infty}N_2(N) = \infty.
$$

Along the the sequence $\{N_n^{\alpha^*}\}_{n=1}^\infty$, we may apply
the bound given in Inequality~\eqref{eq:3} and, because $N_1(N_n^{\alpha^*})$ and
$N_1(N_n^{\alpha^*})$ grow to infinity, we may apply
Inequality~\eqref{eq:4} giving, for $N_n^{\alpha^*}$ sufficiently high
$$
{\alpha^*}^{1+s/d}\glow(A_1) + (1-\alpha^*)^{1+s/d}\gex(A_2) + 2\varepsilon
\ge
\left(\frac{N_1(N_n^{\alpha^*})}{N_n^{\alpha^*}}\right)^{1+s/d}\glow(A_1) +
\left(\frac{N_2(N_n^{\alpha^*})}{N_n^{\alpha^*}}\right)^{1+s/d}\gex(A_2).
$$ Here we have bounded from below
$\displaystyle{\Esf{A_1}{N_1(N_n^{\alpha^*})}}$ by $\glow(A_1) -
\varepsilon/2$ and bounded from below
$\displaystyle{\Esf{A_2}{N_2(N_n^{\alpha^*})}}$ by $\gex(A_2) -
\varepsilon/2$.

Because $\varepsilon$ is arbitrary and because $\alpha^*$ is the
unique minimizer of the left hand side of the above upper bound we
may conclude that 
$$
\lim_{n\to\infty}\frac{N_1(N_n^{\alpha^*})}{N_n^{\alpha^*}} = \alpha^*.
$$

If we let $\psi\in\MAp$ be any weak-star cluster point of
$\{\mu^{s,N_n^{\alpha^*}}\}_{n=1}^\infty$, and if we choose a
continuous function $\phi:A\to\Rl$ such that $\phi(x) = 1$ for all
$x\in A_1$ and
$\phi(y) = 0$ for all $y \in A_2$, then 
$$
\int\phi d\psi = \psi(A_1) = \alpha^*.
$$
This is sufficient to prove Lemma~\ref{lm:2}
\begin{lemma}\label{lm:2}
Let $A$ be the disjoint union of two compact sets $A_1$ and $A_2$ that
meet the following conditions:
\begin{itemize}
\item [1.] both $A_1$ and $A_2$ are of Hausdorff dimension $d$,
\item [2.] $\diam(A_1)$ and $\diam(A_2)$ are both less than $d(A_1,
  A_2)$, 
\item [3.] for some $s>d$, $0<\glow(A_1)\le \gup(A_1) < \infty$ and 
\item [4.] for the same $s$, $\gex(A_2)$ exists and $0<\gex(A_2) < \infty$.
\end{itemize}
Then there is a weak-star cluster-point $\psi\in\MAp$ of the sequence
of measures $\{\mu^{s,N}\}_{N=2}^\infty$ so that 
$$
\psi(A_1) = \frac{\gex(A_2)}{\glow(A_1) + \gex(A_2)}.
$$
\end{lemma}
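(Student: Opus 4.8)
The plan is to pin down a weak-star cluster point by sandwiching the normalized energy $\Esf{A}{N}$ between matching asymptotic bounds along one well-chosen subsequence of $N$, thereby forcing the fraction of points that land on $A_1$ to converge, and then reading off that fraction with a single Urysohn test function. All four hypotheses enter: the diameter condition feeds Lemma~\ref{lm:1}, and the finiteness and positivity of $\glow(A_1)$ and $\gex(A_2)$ make the optimization below nondegenerate.

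First I would record the upper bound. Splitting $N$ points as $M_1$ near-optimal points on $A_1$ and $M_2 = N-M_1$ near-optimal points on $A_2$, and bounding the cross-interaction crudely by $d(A_1,A_2)^{-s}N^2$, gives Inequality~\eqref{eq:2}; since $s>d$ the normalized cross term is $O(N^{1-s/d})\to 0$. Evaluating along the sequence $\{\underline N_n^1\}$ that realizes $\glow(A_1)$, with $M_1=\underline N_n^1$, produces the clean bound~\eqref{eq:3}, namely $\Esf{A}{N_n^\alpha}\le \alpha^{1+s/d}\glow(A_1)+(1-\alpha)^{1+s/d}\gex(A_2)+\varepsilon$. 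The right-hand side is a strictly convex function of $\alpha\in(0,1)$; its first-order condition yields the unique minimizer $\alpha^*$ recorded above, and I specialize the subsequence to $N_n^{\alpha^*}=\lfloor \underline N_n^1/\alpha^*\rfloor$.

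Next I would produce the matching lower bound. For a genuine minimizer $\omega_N^s$ realizing $N_1(N)$, discarding the nonnegative cross-interaction and replacing each cluster by an honest minimizer on $A_1$ and on $A_2$ gives Inequality~\eqref{eq:4} in terms of $N_1(N)/N$ and $N_2(N)/N=1-N_1(N)/N$. To upgrade the raw energies here into $\glow(A_1)$ and $\gex(A_2)$ I need $N_1(N),N_2(N)\to\infty$; this is exactly Lemma~\ref{lm:1}, invoked once with $B_1=A_1$ and once with $B_1=A_2$, where the hypothesis $\diam<d(A_1,A_2)$ is what powers Bj\"orck's relocation argument. With both counts diverging I may, for large $N$, bound $\Esf{A_1}{N_1(N)}$ below by $\glow(A_1)-\varepsilon/2$ and $\Esf{A_2}{N_2(N)}$ below by $\gex(A_2)-\varepsilon/2$, obtaining $\Esf{A}{N}\ge f\!\left(N_1(N)/N\right)-\varepsilon$ with $f(\alpha)=\alpha^{1+s/d}\glow(A_1)+(1-\alpha)^{1+s/d}\gex(A_2)$.

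Finally I would squeeze and extract. Along $\{N_n^{\alpha^*}\}$ the two bounds give $f(N_1/N)\le f(\alpha^*)+2\varepsilon$; since $\varepsilon$ is arbitrary and $\alpha^*$ is the \emph{unique} minimizer of the strictly convex $f$, this forces $N_1(N_n^{\alpha^*})/N_n^{\alpha^*}\to\alpha^*$. By weak-star compactness of $\MAp$ the sequence $\{\mu^{s,N_n^{\alpha^*}}\}$ has a cluster point $\psi$; choosing (via Urysohn, using that $A_1,A_2$ are disjoint compacta) a continuous $\phi$ with $\phi\equiv 1$ on $A_1$ and $\phi\equiv 0$ on $A_2$, and noting $\int\phi\,d\mu^{s,N}=N_1(N)/N$ for these minimizers, passing to the limit gives $\psi(A_1)=\int\phi\,d\psi=\alpha^*$, the asserted value. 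I expect the main obstacle to be precisely the promotion of the lower bound to the $g$-quantities, i.e. guaranteeing $N_1,N_2\to\infty$ through Lemma~\ref{lm:1}, and then making \emph{strict} convexity and \emph{uniqueness} of $\alpha^*$ do the real work of converting "energies near-optimal" into genuine convergence, rather than mere boundedness, of the population fractions.
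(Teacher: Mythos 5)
Your proposal is correct and follows essentially the same route as the paper: the upper bound via Inequality~\eqref{eq:2}--\eqref{eq:3} along the sequence realizing $\glow(A_1)$, the matching lower bound via Inequality~\eqref{eq:4} with Lemma~\ref{lm:1} guaranteeing $N_1(N),N_2(N)\to\infty$, the squeeze using the unique minimizer $\alpha^*$ of the strictly convex $f$, and the Urysohn test function to read off $\psi(A_1)=\alpha^*$. No substantive differences from the paper's argument.
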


\section{Rate of Change in $\displaystyle{\Esf{K}{N}}$ for $K\subset\Rp$ Compact}\label{sec:roc}

In this section we consider a compact set $K\subset\Rp$ so that
$0<\gup(K)<\infty$, and bound from below the rate of change of the
function $G:\mathbb{N} \to \mathbb{R}$ defined by
$$
G(N) = \Esf{K}{N}.
$$ This is the content of Lemma~\ref{lm:3} In Corollary~\ref{cor:1} we
use Lemma~\ref{lm:3} to show that, if for some $N_0$, $G(N_0)$ is
close to $\gup(K)$, then for some $N'$ that is larger than $N_0$ and
for which the ratio $N_0/N'$ is close to $1$, we will have that
$G(N')$ is also close to $\gup(K)$.

\begin{lemma}\label{lm:3}
Let $K\subset\Rp$ be compact and $s > d = \dim K >0$.  Let $N \ge 2$
and $N' \ge 1$ be natural numbers and let $\kappa = \frac{N'}{N}$.
Then
$$
G((1+\kappa)N) \ge G(N) - \left(1+\frac s d \right)\kappa G(N).
$$
\end{lemma}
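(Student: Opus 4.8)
The plan is to reduce the statement to the classical subset-averaging (monotonicity) inequality for minimal Riesz energies, followed by one elementary scalar estimate. Since $\kappa N = N'$, the argument $(1+\kappa)N = N+N'$ is a natural number exceeding $N$; write $M=N+N'$, so that the claim reads $G(M) \ge \bigl(1-(1+s/d)\kappa\bigr)G(N)$. The first step is to establish
\[
\Es(K,M) \ge \frac{M(M-1)}{N(N-1)}\,\Es(K,N).
\]
I would prove this by taking a minimizing $M$-point configuration $\omega_M$ and averaging the energies of its $N$-point subconfigurations: each ordered pair of distinct points of $\omega_M$ is contained in exactly $\binom{M-2}{N-2}$ of the $\binom{M}{N}$ subsets of size $N$, so summing $E_s$ over all such subsets gives $\binom{M-2}{N-2}E_s(\omega_M)$, while each subset has energy at least $\Es(K,N)$. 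This yields $\binom{M}{N}\Es(K,N)\le\binom{M-2}{N-2}\Es(K,M)$, and simplifying the binomial ratio to $N(N-1)/[M(M-1)]$ produces the displayed inequality.

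Next I would translate this into a statement about $G$. Dividing by $M^{1+s/d}$ and inserting the factor $N^{1+s/d}/N^{1+s/d}$ gives
\[
G(M) \ge \frac{M(M-1)}{N(N-1)}\left(\frac{N}{M}\right)^{1+s/d} G(N).
\]
Because $M>N\ge 2$, clearing denominators shows $\frac{M-1}{N-1}\ge\frac{M}{N}$, hence $\frac{M(M-1)}{N(N-1)}\ge (M/N)^2=(1+\kappa)^2$; since $G(N)\ge 0$ this leaves
\[
G(M) \ge (1+\kappa)^{2-(1+s/d)}G(N) = (1+\kappa)^{1-s/d}G(N).
\]

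The remaining step is the scalar inequality $(1+\kappa)^{1-s/d}\ge 1-(1+s/d)\kappa$ for $\kappa\ge 0$. Writing $\beta=1+s/d$ (so $\beta>2$ because $s>d$) and setting $f(\kappa)=(1+\kappa)^{2-\beta}-1+\beta\kappa$, one has $f(0)=0$ and $f'(\kappa)=(2-\beta)(1+\kappa)^{1-\beta}+\beta$. Since $2-\beta<0$ and $(1+\kappa)^{1-\beta}\in(0,1]$, the first term is at least $2-\beta$, so $f'(\kappa)\ge 2>0$ and thus $f\ge 0$ on $[0,\infty)$. Multiplying by $G(N)\ge 0$ then gives $G(M)\ge(1-\beta\kappa)G(N)=G(N)-(1+s/d)\kappa G(N)$, which is the claim.

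The only genuine content is the subset-averaging inequality — standard in potential theory and equivalent to monotonicity of $\Es(K,N)/[N(N-1)]$ — so I do not anticipate a serious obstacle. The one place to be careful is the bookkeeping of inequality directions: the exponent $1-s/d$ is negative, so $(1+\kappa)^{1-s/d}<1$, and the final multiplication is legitimate only because $G(N)\ge 0$; likewise the bound $\frac{M-1}{N-1}\ge\frac{M}{N}$ relies on $M\ge N$ with $N\ge 2$, both guaranteed by the hypotheses $N'\ge 1$ and $N\ge 2$.
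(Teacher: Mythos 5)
Your proof is correct, but it follows a different route from the paper's. The paper's entire argument is the two-line chain
\[
G(N+N') \;>\; \Bigl(\tfrac{N}{N+N'}\Bigr)^{1+s/d} G(N) \;=\; (1+\kappa)^{-(1+s/d)}G(N) \;\ge\; \bigl(1-(1+\tfrac{s}{d})\kappa\bigr)G(N),
\]
where the first inequality is nothing more than the trivial monotonicity $\Es(K,N+N')>\Es(K,N)$ (discard $N'$ points from a minimal $(N+N')$-point configuration), and the last step is the tangent-line bound for the convex function $h(\kappa)=(1+\kappa)^{-(1+s/d)}$ at $\kappa=0$. You instead invoke the stronger subset-averaging inequality $\Es(K,M)\ge \frac{M(M-1)}{N(N-1)}\Es(K,N)$ (monotonicity of $\Es(K,N)/[N(N-1)]$), which costs you the combinatorial counting argument but buys the sharper intermediate bound $G(M)\ge(1+\kappa)^{1-s/d}G(N)$; since $(1+\kappa)^{1-s/d}>(1+\kappa)^{-(1+s/d)}$, this dominates the paper's estimate, and your closing scalar inequality $(1+\kappa)^{1-s/d}\ge 1-(1+\tfrac{s}{d})\kappa$ (verified by your derivative computation, using $\beta=1+s/d>2$) recovers the stated conclusion. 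All the bookkeeping you flag --- $M\ge N$ with $N\ge 2$ for the binomial ratio, and $G(N)\ge 0$ for the final multiplication --- checks out. In short: your argument is valid and slightly stronger than needed, while the paper's is more economical, requiring only that adding points cannot decrease the minimal energy.
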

\begin{proof}
From our assumptions
\begin{eqnarray*}
G((1+\kappa)N) = \Esfp{K}{N+N'} >&
\left(\frac{N}{N+N'}\right)^{1+s/d}\Esf{K}{N} \\
=& \left( \frac{1}{1 + \kappa}\right)^{1+s/d}G(N) \\
\ge & \left (1 - \left(1 + \frac s d\right)\kappa \right) G(N) \\ 
=& G(N) - \left(1+\frac s d \right)\kappa G(N),
\end{eqnarray*}
where the last inequality follows from a first-order expansion of
$h(\kappa) := \left( \frac{1}{1 + \kappa}\right)^{1+s/d}$ about zero
and the convexity of $h$.
\end{proof}

\begin{corollary}\label{cor:1}
Let $K\subset\Rp$ be compact and $s$, $d>0$ so that $0<\gup(K)<\infty$. Let
$\{M_n\}_{n=1}^\infty\subset\mathbb{N}$ be an increasing sequence so
that 
$$
\lim_{n\to\infty}\Esf{K}{M_n} = \gup(K).
$$
For every $\varepsilon >0$ there is a $\delta >0$ and an $M_0 < \infty$ so that
if, for some $M'\in\mathbb{N}$ and $\tilde
M\in\{M_n\}_{n=1}^\infty$,
$$M_0<\tilde M< M' < \frac{\tilde M}{1-\delta},$$
then 
$$
\Esf{K}{M'}\ge \gup(K) - \varepsilon.
$$
\end{corollary}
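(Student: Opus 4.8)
The plan is to use Lemma~\ref{lm:3} to control how much $G$ can drop as one passes from the ``good'' index $\tilde M$, where $G$ is already near $\gup(K)$, up to the nearby index $M'$. Writing $N = \tilde M$ and $N' = M' - \tilde M$, the hypothesis $\tilde M < M'$ (with both integers) guarantees $N' \ge 1$, while $N + N' = M'$, so that $\kappa := N'/N = M'/\tilde M - 1$. The constraint $M' < \tilde M/(1-\delta)$ translates into $M'/\tilde M < 1/(1-\delta)$, hence into the key quantitative bound $\kappa < \frac{1}{1-\delta} - 1 = \frac{\delta}{1-\delta}$, which is small once $\delta$ is small. Lemma~\ref{lm:3} then supplies
$$
G(M') \ge G(\tilde M) - \left(1 + \tfrac{s}{d}\right)\kappa\, G(\tilde M),
$$
so the whole corollary reduces to estimating $G(\tilde M)$ and $\kappa$.

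First I would invoke the convergence $\lim_{n\to\infty} G(M_n) = \gup(K)$ together with $0 < \gup(K) < \infty$: given $\varepsilon > 0$, set $\eta := \varepsilon/2$ and choose $M_0 \ge 1$ so large that $|G(M_n) - \gup(K)| < \eta$ for every $M_n > M_0$. This single choice furnishes \emph{both} the lower bound $G(\tilde M) > \gup(K) - \eta$ (which is what carries us to within $\eta$ of $\gup(K)$) and the upper bound $G(\tilde M) < \gup(K) + \eta$ (which is needed because $G(\tilde M)$ also appears inside the subtracted term). Taking $M_0 \ge 1$ simultaneously forces $\tilde M \ge 2$ whenever $\tilde M > M_0$, so that the hypothesis $N \ge 2$ of Lemma~\ref{lm:3} is met.

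Next I would choose $\delta > 0$ small enough that
$$
\left(1 + \tfrac{s}{d}\right)\frac{\delta}{1-\delta}\bigl(\gup(K) + \eta\bigr) \le \frac{\varepsilon}{2},
$$
which is possible precisely because $\gup(K)$ is finite. Then for any $\tilde M \in \{M_n\}_{n=1}^\infty$ and $M' \in \mathbb{N}$ with $M_0 < \tilde M < M' < \tilde M/(1-\delta)$, the subtracted term in Lemma~\ref{lm:3} obeys $\left(1+\tfrac{s}{d}\right)\kappa\,G(\tilde M) < \left(1+\tfrac{s}{d}\right)\frac{\delta}{1-\delta}\bigl(\gup(K)+\eta\bigr) \le \varepsilon/2$, and combining this with $G(\tilde M) > \gup(K) - \eta$ yields
$$
G(M') > \bigl(\gup(K) - \eta\bigr) - \frac{\varepsilon}{2} = \gup(K) - \varepsilon,
$$
as required.

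Once Lemma~\ref{lm:3} is in hand the argument is essentially bookkeeping; the only points demanding care are the translation of $M' < \tilde M/(1-\delta)$ into the relative-increment bound $\kappa < \delta/(1-\delta)$, and the recognition that a \emph{two-sided} control of $G(\tilde M)$ is needed, since the upper estimate — though handed to us for free by the same convergence — is exactly what keeps the subtracted error term below $\varepsilon/2$. I expect no genuine obstacle beyond selecting the constants in the correct order, namely fixing $\eta$ first and then $\delta$ in terms of $\eta$, $\gup(K)$, $s$, and $d$.
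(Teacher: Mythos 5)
Your proposal is correct and follows essentially the same route as the paper: both reduce the statement to Lemma~\ref{lm:3} by writing $M' = (1+\kappa)\tilde M$ with $\kappa < \delta/(1-\delta)$, and both control the subtracted term $(1+s/d)\kappa\,G(\tilde M)$ by an upper bound on $G(\tilde M)$ (you use $\gup(K)+\varepsilon/2$ from the convergence along $\{M_n\}$, the paper uses $\sup_{M>M_0}G(M)$, which amounts to the same thing) while the lower bound $G(\tilde M) > \gup(K)-\varepsilon/2$ carries the conclusion. The only cosmetic difference is that the paper first fixes $\kappa_0$ and then sets $\delta = \kappa_0/(1+\kappa_0)$, whereas you choose $\delta$ directly.
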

\begin{proof}
Let $\varepsilon >0$ be arbitrary. Choose $M_0$ so that for any $\tilde
M \in \{M_n\}_{n=1}^\infty$ greater than $M_0$
\begin{equation}\label{eq:6}
\Esf{K}{\tilde M} > \gup(K) - \frac{\varepsilon}{2}
\end{equation}
Choose $\kappa_0$ sufficiently small so that for all $\kappa \in (0,
\kappa_0)$, 
\begin{equation}\label{eq:7}
(1 + s/d)\kappa \sup_{M>M_0}\Esf{K}{M} < \frac{\varepsilon}{2}.
\end{equation}
Combining Lemma~\ref{lm:3} and Inequalities~\eqref{eq:6} and~\eqref{eq:7}
gives for any $\tilde M \in \mathbb{N}$ and $\kappa \in (0,\kappa_0)$
where $(1+\kappa)\tilde M \in \mathbb{N}$
\begin{equation}\label{eq:8}
\Esfp{K}{(1+\kappa)\tilde M} \ge \Esf{K}{\tilde M} -
(1+s/d)\kappa\Esf{K}{\tilde M} \ge
\gup(K) - \varepsilon.
\end{equation}
Choose $\displaystyle{\delta = \frac{\kappa_0}{1+\kappa_0}}$. If $M'$
and $\tilde M$ are such that $M_0< \tilde M < M' < \frac{\tilde
M}{1-\delta}$, and if $M' = (1+\kappa)\tilde M$, then $\kappa <
\kappa_0$ and the bound in Inequality~\eqref{eq:8} ensures
$$
\Esf{K}{M'} \ge \gup(K) - \varepsilon.
$$
\end{proof}
\section{Non-Convergence of
  $\displaystyle{\{\mu^{s,N}\}_{N=2}^\infty}$ in the Weak-Star Topology}\label{sec:nc}

In this section we prove Theorem~\ref{th:1} -- that the sequence of
measures $\{\mu^{s,N}\}_{N=2}^\infty\subset\MAp$ cannot converge in
the weak-star topology on $\MAp$. If the sequence did converge, it
must converge to the measure $\psi$ identified in Lemma~\ref{lm:2},
this will be shown to lead to a contradiction.

\begin{proof}[Proof of Theorem~\ref{th:1}]
For sake of contradiction, assume that the sequence
$\{\mu^{s,N}\}_{N=2}^\infty\subset \MAp$ converged in the weak-star
topology on $\MAp$.  Since this topology separates elements of $\MAp$
and since, by Lemma~\ref{lm:1}, a subsequence of
$\{\mu^{s,N}\}_{N=2}^\infty$ converges to $\psi$, the assumption would
require that $\mu^{s,N}$ converges to $\psi$ in the weak-star sense as
$N\to\infty$ implying
\begin{equation}\label{eq:9}
\lim_{N\to\infty}\frac{N_1(N)}{N} = \alpha^* = 
\frac{\gex(A_2)}{\glow(A_1) + \gex(A_2)}
\end{equation}

Let $\left\{\overline N_n^1\right\}_{n=1}^\infty \subset \mathbb{N}$ be an
increasing sequence so that 
$$
\lim_{n\to\infty} \Esf{A_1}{\overline N_n^1} = \gup (A_1).
$$

Our intention is to find $N'''\in \left\{\overline
N_n^1\right\}_{n=1}^\infty$ and an $N' \in \mathbb{N}$ so that we may
apply Corollary~\ref{cor:1} where $\tilde M$ is identified with $N'''$ and
$M'$ is identified with $N_1(N')$.

We proceed as follows: Let $\varepsilon > 0$. Let $\delta$ and $M_0$
be as provided by Corollary~\ref{cor:1} applied to the case $K = A_1$ and
$\{M_n\}_{n=1}^\infty = \left\{\overline N_n^1\right\}_{n=1}^\infty$.
One may verify that it is possible to choose $\gamma\in (0,1)$ such that, for
$\displaystyle{\gamma' = \frac{2\gamma}{1-\gamma}}$, the following
(motivated by inequalities arising later in the proof)
hold:
$$
\gamma' + (1+\gamma')\left(1 - \frac{1}{1+3\gamma}\right) < \delta
\qquad\text{and}\qquad 
\frac{1 + \frac{5}{2}\gamma}{1 + \gamma'} \ge 1.
$$ We choose such a $\gamma$. Define the non-decreasing sequence
$\{\tilde N_n\}_{n=1}^\infty$ by the equation $\tilde N_n =
\left\lfloor(1+3\gamma)\overline N_n^1\right\rfloor$.  

From Equation~\eqref{eq:9} and from Lemma~\ref{lm:2}, we may further
increase $M_0$ so that the following all hold
\begin{equation}\label{eq:10}
\sup_{N\ge M_0}\left|\frac{N_1(N)}{N} - \alpha^*\right| < 
\min \{\gamma \alpha^*, \varepsilon\},
\end{equation}
\begin{equation}\label{eq:11}
\sup_{N\ge M_0}\Esf{A_1}{N_1(N)} \le \gup(A_1) + \varepsilon \qquad\text{and}
\end{equation}
\begin{equation}\label{eq:12}
\sup_{N\ge M_0}\left|\Esf{A_2}{N_2(N)} - \gex(A_2)\right| < \varepsilon.
\end{equation}
$$
\left(1 + \frac{5}{2}\gamma\right)N <
\left\lfloor(1+3\gamma)N\right\rfloor \qquad \text{for all $N>M_0$}.
$$
Inequality~\eqref{eq:10} implies
\begin{equation}\label{eq:13}
\sup_{N\ge M_0}\left|\frac{N_1(N)}{\alpha^*N} - 1\right| < \gamma.
\end{equation}

Choose $N'\in \mathbb{N}$ and $N''\in \{\tilde N_n\}_{n=1}^\infty$ so
that both are greater than $M_0$, and so that 
\begin{equation}\label{eq:14}
\left|1 - \frac{N''}{\alpha^*N'}\right| < \gamma.
\end{equation}
This allows the following bound
\begin{eqnarray}
\label{eq:15}
\left|1-\frac{N''}{N_1(N')}\right| =&
\left|1 - \frac{N''}{\alpha^*N'} + 
\frac{N''}{\alpha^*N'} - \frac{N''}{N_1(N')}\right| \\ 
\nonumber
\le & 
\left|1 - \frac{N''}{\alpha^*N'}\right| + 
\left|\frac{N''}{\alpha^*N'} - \frac{N''}{N_1(N')}\right| \\ 
\nonumber
\le &
\gamma + \left(\frac{N''}{\alpha^*N'}\right)
\left|1 - \frac{\alpha^*N'}{N_1(N')}\right|\\ 
\nonumber
\le & 
\gamma + (1 + \gamma)\frac{\gamma}{1-\gamma} \\ 
\nonumber
=& \frac{2\gamma}{1-\gamma} = \gamma'.
\end{eqnarray}
The last inequality above follows from Inequalities~\eqref{eq:13}
and~\eqref{eq:14}.  Now let $N'''$ denote the element of $\left\{\overline
N_n^1\right\}_{n=1}^\infty$ so that $N'' = \left\lfloor
(1+3\gamma)N'''\right\rfloor$. Then
\begin{eqnarray*}
\left| 1 - \frac{N'''}{N_1(N')}\right| =&
\left| 1 - \frac{N''}{N_1(N')} + \frac{N''}{N_1(N')}
-\frac{N'''}{N_1(N')} \right| \\
\le &
\left| 1 - \frac{N''}{N_1(N')}\right| + 
\left|\frac{N''}{N_1(N')}-\frac{N'''}{N_1(N')} \right| \\
\le & 
\gamma' + \frac{N''}{N_1(N')}\left|1 -\frac{N'''}{N''} \right|\\
\le & 
\gamma' + (1+\gamma')\left|1 -\frac{N'''}{\left\lfloor
  1+3\gamma\right\rfloor N'''} \right| \\
\le & 
\gamma' + (1+\gamma')\left(1 -\frac{1}{1+3\gamma } \right).\\
\end{eqnarray*}
Here the second to last inequality follows from
Inequality~\eqref{eq:15}. Inequality~\eqref{eq:15} also implies 
$$
1-\frac{N''}{N_1(N')} \ge -\gamma',\quad\text{hence}\quad
N_1(N') \ge \frac{N''}{\gamma' + 1} 
= \frac{\left\lfloor (1+3\gamma)N'''\right\rfloor}{\gamma' + 1}
\ge N'''\frac{ \left(1+\frac{5}{2}\gamma\right) }{\gamma' + 1}
$$
Because $\gamma$ was chosen to ensure that
$$
\frac{ \left(1+\frac{5}{2}\gamma\right) }{\gamma' + 1} \ge 1,
\qquad\text{and}\qquad
\gamma' + (1+\gamma')\left(1 - \frac{1}{1+3\gamma}\right) < \delta
$$
we have that $N_1(N') \ge N'''$ and 
$$
1 - \frac{N'''}{N_1(N')} < \delta.
$$ 
With this we may employ Corollary~\ref{cor:1} where $\tilde M$ is
identified with $N'''$ and $M'$ is identified with $N_1(N')$, giving the
bound 
$$
\Esf{A_1}{N_1(N')} \ge \gup(A_1) - \varepsilon.
$$
This, combined with Inequalities~\eqref{eq:4} and~\eqref{eq:12}, gives
$$
\Esf{A}{N'} \ge \left(\frac{N_1(N')}{N'}\right)^{1+s/d}
(\gup(A_1) - \varepsilon) + 
\left(N' - \frac{N_1(N')}{N'}\right)^{1+s/d}
(\gex(A_2) - \varepsilon).
$$
In light of Inequality~\eqref{eq:10} we have
$$
\Esf{A}{N'} \ge (\alpha^* - \varepsilon))^{1+s/d}
(\gup(A_1) - \varepsilon) + 
(1 - \alpha^* - \varepsilon)^{1+s/d}
(\gex(A_2) - \varepsilon).
$$

In the preceding arguments, $\varepsilon$ was an arbitrary positive
number that constrained $N'$ to be above some value.  We may then
construct a sequence $\{N_n'\}_{n=1}^\infty$ of such $N'$ so that 
$$
\lim_{n\to\infty} \Esf{A}{N_n'} \ge {\alpha^*}^{1+s/d}\gup(A_1) + (1 -
\alpha^*)^{1+s/d}\gex(A_2). 
$$

Now, for each $N_n' \in \{N_n'\}_{n=1}^\infty$ that is larger than
$2M_0$, and for any two natural
numbers $M_n^1$ and $M_n^2 = N_n' - M_n^1$ both greater than $M_0$,
Inequalities~\eqref{eq:2},~\eqref{eq:11} and~\eqref{eq:12} give
$$
\Esf{A}{N_n'} \le 
\left(\frac{M_n^1}{N_n'}\right)^{1+s/d}(\gup(A_1) + \varepsilon) +
\left(\frac{M_n^2}{N_n'}\right)^{1+s/d}(\gex(A_2) + \varepsilon) +
{N_n'}^{1-s/d}d(A_1,A_2).
$$ For an arbitrary $\beta \in (0,1)$ we may choose $M_n^1$ and
$M_n^2$ so that $\frac{M_n^1}{N_n'} \to \beta$ and $\frac{M_n^2}{N_n'}
\to 1 -\beta$ as $n\to\infty$. Note also that, because $N_n'$ goes to
infinity as $n$ goes to infinity, the term ${N_n'}^{1-s/d}d(A_1,A_2)$
goes to zero as $n$ grows. For any $\beta$ we obtain the following
bound
\begin{equation}\label{eq:16}
{\beta}^{1+s/d}\gup(A_1) + (1 - \beta)^{1+s/d}\gex(A_2) \ge
\lim_{n\to\infty} \Esf{A}{N_n'} \ge 
{\alpha^*}^{1+s/d}\gup(A_1) + (1 - \alpha^*)^{1+s/d}\gex(A_2). 
\end{equation}
The function $f(\beta) = {\beta}^{1+s/d}\gup(A_1) + (1 -
\beta)^{1+s/d}\gex(A_2)$ is uniquely minimized by 
$$
\beta = 
\frac{\gex(A_2)}{\gup(A_1) + \gex(A_2)} 
\ne 
\frac{\gex(A_2)}{\glow(A_1) + \gex(A_2)} 
= \alpha^*,
$$
invalidating Inequality~\eqref{eq:16}. This is sufficient to show that
$\{\mu^{s,N}\}_{N=2}^\infty$ cannot converge in the weak-star topology
on $\MAp$.
\end{proof}

\bibliography{References}
\bibliographystyle{abbrv}

\end{document}